%
%
\documentclass{amsart}
\usepackage{amssymb, hyperref}
\usepackage{color}

\newtheorem{thm}{Theorem}
\newtheorem{prp}{Proposition}

\newtheorem{lma}{Lemma}

\theoremstyle{definition}
\newtheorem{df}{Definition}
\newtheorem{nt}{Notation}

\newtheorem{ex}{Example}

\def\dist{{\rm dist}}

\pagestyle{plain}

\begin{document}
\title{Dirichlet problems on graphs with ends}
\subjclass[2010]{Primary: 31C20; Secondary: 05A99, 31C05}
\keywords{Discrete, Subharmonic, Potential Theory, Dirichlet Problem}

\author{Tony L. Perkins }
\address{ Department of Mathematics,  Spring Hill College, 4000 Dauphin Street, Mobile, Alabama 36608-1791}
\date{\today}

\begin{abstract}
In classical potential theory, one can solve the Dirichlet problem on unbounded domains such as the upper half plane.  These domains have two types of boundary points; the usual finite boundary points and another point at infinity.  W. Woess has solved a discrete version of the Dirichlet problem on the ends of graphs analogous to having multiple points at infinity and no finite boundary. Whereas C. Kiselman has solved a similar version of the Dirichlet problem on graphs analogous to bounded domains.  In this work, we combine the two ideas to solve a version of the Dirichlet problem on graphs with finitely many ends and boundary points of the Kiselman type.
\end{abstract}

\maketitle
\thispagestyle{empty}

\section{Introduction}
Let $D\subset\mathbb{R}^n$ be an open set and take a function $f\colon \partial D\rightarrow \mathbb{R}$.  The Dirichlet problem on $D$ with boundary data $f$ is to find a unique function $h$ which is harmonic on $D$, continuous on $\overline{D}$, and agrees with $f$ on the boundary, i.e. $h|_{\partial D}=f$.  Various methods, such as that of Perron, allow one to always find a harmonic function $h$ which is associated to $f$ in a natural way.  The main challenge is to determine when (or where) this $h$ matches $f$ on $\partial D$.  Given the way we've defined the Dirichlet problem the continuity of $f$ is clearly necessary.  For bounded open sets $D$, the uniqueness criteria follows from the maximum principle for harmonic functions.  However for unbounded domains uniqueness is non-trivial.

Consider the example of the upper half plane $\mathbb{H}=\{(x,y)\in \mathbb{R}^2\colon y\ge 0\}$.  If one wants to solve the Dirichlet problem on $\mathbb{H}$, we have trouble with uniqueness.  Indeed, the functions $f_1(x,y)=y$ and $f_2(x,y)=0$ are harmonic and identically equal to $0$ on the boundary of the upper half plane. In classical potential theory, to obtain uniqueness one adds a point at infinity, $\hat{\mathbb{H}}=\mathbb{H}\cup \{\infty\}$, and then considers the Dirichlet problem on $\hat{\mathbb{H}}$ with boundary $\partial\hat{\mathbb{H}}=\partial\mathbb{H}\cup\{\infty\}$.
Using Perron's method, to each $f\colon \partial \hat{\mathbb{H}}\rightarrow \mathbb{R}$, we can find a corresponding function $h\colon \hat{\mathbb{H}}\rightarrow \mathbb{R}$ which is harmonic.  To ensure that this $h$ equals $f$ on $\partial \hat{\mathbb{H}}$ all that is required of $f$ is to be continuous and bounded on $\partial \hat{\mathbb{H}}$, where continuous on $\partial\hat{\mathbb{H}}$ means that $f$ is continuous on $\partial \mathbb{H}$ and continuous at the point infinity.

In discrete potential theory, one often studies the Dirichlet problem on directed graphs.  Here one considers an arbitrary at most countable set $X$ as the vertex set.  Weighted directed edges are provided by a structure function $\lambda:X\times X \rightarrow \mathbb{R}$, where $\lambda\ge 0$, and $\lambda(x,y)>0$ denotes the existence of a directed edge from $x$ to $y$ with edge weight equal to $\lambda(x,y)$  ($\lambda(x,y)=0$ implies that there is no edge from $x$ to $y$), and with the additional condition $\sum_{y\in X} \lambda(x,y)=1$, the graph has the structure of a Markov chain.

Considering the domain as a Markov chain allows one to attack these types of problems with either probability or analysis.  This is of course true for the classical setting as well, e.g. \cite{D01}.  Here we take the analytic approach.  

In \cite{K05} Kiselman defines the boundary of $X$ as
\[\partial{X} = \{x_0\in X\colon \lambda(x_0,x_0)=1\}\equiv \{x_0\in X \colon \lambda(x_0,y)=0 \text{ for all } y\in X\setminus\{x_0\}\}\]
and studies the Dirichlet problem in this setting.  To obtain uniqueness results, he considers only finite graphs.

However in probability one often studies a {\it reversible} Markov chain, i.e. $\lambda(x,y)>0 \iff \lambda(y,x)>0$.  In this setting Kiselman type boundary points trivially become isolated points of the graph.  Thus the only interesting formulation of a Dirichlet problem is a problem at infinity.  The principle works in this setting are those of \cite{CSW93,W88, W96}, where they solve a Dirichlet problem where the boundary points are the ends of the graph.

The primary goal of this paper is to begin the study of discrete Dirichlet problems with mixed boundary type, i.e. a graph with both Kiselman type boundary points and ends.  As a motivational example we look at the following simple `discretized' version of the upper half plane.
\begin{ex}
Consider the set $X=\mathbb{Z}^2 \cap \mathbb{H}$.  Define
\[\lambda((x_1,y_1), (x_2,y_2)) =
\begin{cases}
1/4 &\colon  (x_2-x_1)^2 + (y_2-y_1)^2=1 \text{ and } y_1 \neq 0 \\
1 &\colon x_1=x_2, y_1=y_2 \text{ and } y_1=0 \\
0 &\colon \text{ otherwise.}
\end{cases}\]
Then $\partial X = \mathbb{Z}^2 \cap \{y=0\}$ is a boundary set.  It is easy to check that the functions $f_1(x,y)=y$ and $f_2(x,y)=0$ are harmonic and identically equal to $0$ on $\partial X$.
\end{ex}

So uniqueness remains of principal concern.  However in $\hat{X}$, the end compactification of $X$, we add a single point at infinity.  Therefore $\hat{X}$ has the interesting structure of both a Kiselman type boundary and one end.

To study the Dirichlet problem on graphs of this type we introduce the notion of a quasi-reversible graph (reversible except for some Kiselman type boundary points).  We prove (Theorem \ref{T:max-prnc}) a maximum principle with regard to a boundary of mixed type, from which follows (Theorem \ref{T:unique}) a uniqueness result that holds for all connected quasi-reversible graphs.

It is a remarkable property of these mixed boundary problems that one-ended graphs are already non-trivial; a stark contrast to the setting in \cite{CSW93, W96}.  In the last section, we show (Theorem \ref{T:exists}) there exists a unique solution to Dirichlet problems of mixed boundary types on one ended graphs.  Finally we are able to extend the previous result to show (Theorem \ref{T:exists2}) there also exists a unique solution on graphs with finitely many ends.  Analogously to the classical setting we require the boundary data to be continuous at infinity.  

The Dirichlet problem, i.e.\ harmonic interpolation, on graphs has applications to coverage problems on topological sensor networks, \cite[pp 62--63]{G10} and shape description problems in digital image analysis \cite{K05, W05}.

\section{Preliminaries}\label{S:basic}
Introductions to various aspects of discrete potential theory can be found in \cite{BLS07, S94, W94}. The basic structure we will be working over can be thought of as a directed graph with weights on the edges, which is also known as a network.  The weights will be assumed to be non-negative and can be thought of as transition probabilities.  Therefore another interpretation, e.g. \cite{W94}, is as a random walk on a graph or a Markov chain.

We start by considering an arbitrary at most countable set $X$.  This is the domain of our harmonic functions and can be thought of as the vertices of a graph.

A function $f:X\times X\rightarrow \mathbb{C}$ is called a \emph{structure function} if the set $\{y\colon f(x,y)\neq 0\}$ is finite for all $x\in X$, thereby ensuring that our graph is locally finite.  A common operation one takes on structure functions is given by
\[(f\diamond g)(x,y) := \sum_{\zeta \in X} f(x,\zeta)g(\zeta,y) \qquad (x,y\in X). \]
In fact we are interested in a particular type of structure function called a \emph{weight function} which has the additional properties $\lambda \colon X\times X \rightarrow \mathbb{R}$, with $\lambda \ge 0$ and $\sum_{\zeta\in X} \lambda(x,\zeta) = 1$ for all $x\in X$.

We think of $\lambda(x,y)$ as defining a weight on a directed edge from $x$ to $y$ with the interpretation of $\lambda(x,y)=0$ as meaning that there is no edge.  For us the domain $X$ and the weight $\lambda$ are fixed, hence we will often refer to the graph $X$ without ambiguity. Also note that $\lambda$ need not be symmetric.

\subsection{The Laplacian and harmonic functions}\label{SS:lap_sub}

The Laplacian $\Delta$ at a point $x\in X$ is defined with respect to $\lambda$ on functions $f\colon X \rightarrow \mathbb{R}$ by
\[\Delta f(x) = \sum_{\zeta \in X} \lambda(x,\zeta)[f(\zeta)-f(x)].\]

A function $f\colon X \rightarrow \mathbb{R}$ is said to be \emph{harmonic} if $\Delta f(x)=0$ for every $x\in X$ and \emph{subharmonic} if $\Delta f(x)\ge 0$ for every $x\in X$.  Some simple arithmetic shows that subharmonicity is equivalent to the property that
\[f(x) \le \sum_{\zeta \in X} \lambda(x,\zeta)f(\zeta) \qquad (x\in X).\]

Notice that constant functions are always harmonic, no matter what weight $\lambda$ is under consideration.
\begin{nt}
The set of harmonic and subharmonic functions on $X$ will be denoted by $H(X)$ and $S(X)$, respectively.
\end{nt}

Some properties of subharmonic functions and their relationship to other functions on graphs can be found \cite{BP14}.

\subsection{Kiselman's Boundary}\label{SS:Kiselman}
In \cite{K05}, Kiselman introduces the following notion of boundary on the directed graph $X$.  The boundary of $X$, denoted by $\partial X$, is the set
\begin{align*}
\partial X &= \{x\in X\colon \lambda(x,y)=0 \text{ for all } y \in X\setminus\{x\} \}\\
&= \{x\in X\colon \lambda(x,x)=1 \}.
\end{align*}
The Kiselman boundary can be thought of intuitively as the stable equilibria of the Markov process, those states from which a random walk can not escape.

Its complement $X^\circ = X \setminus \partial X$ is the \emph{interior} of $X$. We note that a point $x$ has a neighbor different from $x$ if and only if $x$ is in the interior. Given a point $a \in X$ we define $N_0(a) = \{a\}$ and then inductively
\[N_{k+1}(a) = \{y\colon \lambda(x, y) > 0 \text{ for some } x \in N_k(a)\}\qquad (k \in \mathbb{N}).\]
The union of all the $N_k(a)$ will be called the \emph{component} of $a$ and be denoted by $C(a)$. Clearly $C(a) = \{a\}$ if and only if $a$ is a boundary point.  We shall say that $X$ is \emph{boundary connected} if $C(a)$ intersects $\partial X$ for every $a \in X^\circ$. (In particular $\partial X$ is nonempty if $X$ is boundary connected and
nonempty.) We shall say that $X$ is \emph{connected} if $C(a) = X$ for all $a \in X^\circ$. It is possible that $\partial X$ may be empty.

It is useful to observe that for any function $f$ on $X$ we have
\begin{equation*}\sum_{\zeta \in X} \lambda(x , \zeta) f(\zeta) = \sum_{\zeta \in C(x)} \lambda(x , \zeta) f(\zeta)=\sum_{\zeta \in N_1(x)} \lambda(x , \zeta) f(\zeta),
\end{equation*}
as $\lambda(x, \eta)=0$ for all $\eta \in X\setminus N_1(x)$ by definition.

Kiselman (see \cite{K05}) provides some maximum principles for subharmonic functions on directed graphs, which can be traced back to \cite{LT69}.
\begin{prp}[Kiselman]\label{P:max1}
If $X$ is finite and boundary connected, then $\sup_{X}u = \sup_{\partial X}u$ for all subharmonic functions $u$ on $X$. The converse holds.
\end{prp}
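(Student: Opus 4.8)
The plan is to prove the forward direction by a standard propagation-of-maxima argument, exploiting finiteness to locate where the supremum is attained, and then to prove the converse by a contrapositive construction. First I would observe that since $X$ is finite, every subharmonic function $u$ attains its maximum $M = \sup_X u$ on some nonempty set $A = \{x \in X : u(x) = M\}$. The key step is to show that if $A$ contains an interior point, then $A$ must meet $\partial X$. Take $x \in A \cap X^\circ$. Subharmonicity gives $u(x) \le \sum_{\zeta \in N_1(x)} \lambda(x,\zeta) u(\zeta)$, and since $\sum_\zeta \lambda(x,\zeta) = 1$ with each $u(\zeta) \le M = u(x)$, the inequality forces $u(\zeta) = M$ for every $\zeta$ with $\lambda(x,\zeta) > 0$; that is, $N_1(x) \subseteq A$. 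Iterating, $N_k(x) \subseteq A$ for all $k$, hence $C(x) \subseteq A$. By boundary connectedness $C(x)$ meets $\partial X$, so $A \cap \partial X \neq \emptyset$ and $\sup_{\partial X} u = M = \sup_X u$. (If $A \subseteq \partial X$ already, there is nothing to prove; and $A$ is nonempty and must contain some point, which is either a boundary point or an interior point, so these two cases are exhaustive.)

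For the converse, I want to show that if $X$ is \emph{not} boundary connected (including the possibility that $X$ is infinite, though the statement as phrased presumably keeps $X$ finite and only drops boundary connectedness — I would treat the finite case), then the maximum principle fails, i.e.\ there is a subharmonic $u$ with $\sup_X u > \sup_{\partial X} u$. If $X$ is finite but not boundary connected, pick an interior point $a$ whose component $C(a)$ misses $\partial X$. I would then try to take $u = \mathbf{1}_{C(a)}$, the indicator of $C(a)$, and check it is subharmonic: at a point $x \in C(a)$ every neighbor lies in $C(a)$ (since $N_1(x) \subseteq C(a)$ by definition of the component), so $\sum_\zeta \lambda(x,\zeta) u(\zeta) = 1 = u(x)$; at a point $x \notin C(a)$ we have $u(x) = 0 \le \sum_\zeta \lambda(x,\zeta) u(\zeta)$ trivially. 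Thus $u \in S(X)$, $\sup_X u = 1$, but $u \equiv 0$ on $\partial X$ since $C(a) \cap \partial X = \emptyset$, so $\sup_{\partial X} u = 0$ (taking the convention that the supremum over the empty set causes no issue, or noting $\partial X$ could still be nonempty elsewhere). This contradicts the maximum principle, completing the converse.

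The main obstacle I anticipate is a bookkeeping subtlety rather than a deep one: making sure the component set $C(a)$ is genuinely "absorbing" in the sense that $N_1(x) \subseteq C(a)$ for every $x \in C(a)$ — this is immediate from the inductive definition $N_{k+1}(a) = \{y : \lambda(x,y) > 0 \text{ for some } x \in N_k(a)\}$, but one must state it cleanly. A secondary point of care is the precise meaning of "the converse" in the proposition: it presumably asserts that among finite graphs, the maximum principle characterizes boundary connectedness, so I would state the converse in that restricted form and flag that without finiteness the indicator construction still works but the forward direction can fail for other reasons. I would also double-check the degenerate cases: if $X = \partial X$ (all boundary points) the statement is trivial, and if $\partial X = \emptyset$ while $X$ is finite and connected, then no interior point can have $C(x)$ meeting $\partial X$, so $X$ is not boundary connected and the constant function already violates nothing — but here one should note the forward implication's hypothesis fails, consistent with the converse.
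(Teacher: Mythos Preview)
Your argument is correct and is the standard one: propagation of the maximum through the component $C(x)$ via the averaging inequality, together with the indicator $\mathbf{1}_{C(a)}$ for the converse, is exactly how this result is established. Note, however, that the paper does not give its own proof of this proposition; it is quoted as a result of Kiselman (see \cite{K05}, with roots in \cite{LT69}), so there is nothing in the paper to compare against beyond confirming that your approach matches the classical one it is citing.
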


Later in Section \ref{S:max_nn-cnst}, Proposition \ref{P:max1} of Kiselman is generalized by our Theorem \ref{T:max-prnc} to include the ends of a graph.

\begin{prp}[Kiselman]\label{P:max2}
If $X$ is connected (finite or infinite), then a subharmonic function which attains its supremum at an interior point must be constant.  The converse holds.
\end{prp}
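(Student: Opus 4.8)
The plan is to prove both implications directly: the forward direction by an iterated ``propagation of the maximum'' argument, and the converse by exhibiting an explicit non-constant example whenever connectedness fails.

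For the forward direction, suppose $u \in S(X)$ with $u(a) = M := \sup_X u$ for some interior point $a$ (if $X^\circ = \emptyset$ there is nothing to prove). The first step is a purely local observation: subharmonicity at $a$ gives
\[
M = u(a) \le \sum_{\zeta \in X} \lambda(a,\zeta) u(\zeta) = \sum_{\zeta \in N_1(a)} \lambda(a,\zeta) u(\zeta),
\]
and since the $\lambda(a,\zeta)$ sum to $1$ while $u(\zeta) \le M$ for all $\zeta$, this expresses $M$ as a convex combination of numbers that are all $\le M$; equality forces $u(\zeta) = M$ for every $\zeta \in N_1(a)$. The second step is to observe that this argument repeats: any \emph{interior} point of $N_1(a)$ is again a point where $u$ attains $M$, so the same computation gives $u \equiv M$ on its $N_1$-neighborhood, whereas any \emph{boundary} point $x$ of $N_1(a)$ has $N_1(x) = \{x\}$ and so contributes nothing new. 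A routine induction on $k$ then shows $u \equiv M$ on $N_k(a)$ for all $k \ge 0$, hence on $C(a) = \bigcup_k N_k(a)$; and $C(a) = X$ because $X$ is connected, so $u$ is constant.

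For the converse I would argue the contrapositive: assuming $X$ is not connected, build a non-constant subharmonic function that attains its supremum at an interior point. Choose $a \in X^\circ$ with $C(a) \neq X$ and let $u$ be the indicator function of $C(a)$. The key point is that $C(a)$ is forward invariant under $\lambda$, i.e.\ $x \in C(a)$ implies $N_1(x) \subseteq C(a)$, which is immediate from the definition of the sets $N_k$. Consequently, for $x \in C(a)$ we get $\sum_{\zeta} \lambda(x,\zeta) u(\zeta) = \sum_{\zeta \in N_1(x)} \lambda(x,\zeta) = 1 = u(x)$, and for $x \notin C(a)$ we get $\sum_{\zeta} \lambda(x,\zeta) u(\zeta) \ge 0 = u(x)$; hence $u \in S(X)$. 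It attains its maximum value $1$ at $a \in X^\circ$, and it is non-constant because $X \setminus C(a) \neq \emptyset$.

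The only delicate point is the bookkeeping in the forward direction: phrasing the propagation cleanly as an induction over the sets $N_k(a)$ and making sure that boundary points met along the way really do terminate the chain without obstructing the conclusion. The rest reduces to the elementary convex-combination equality-case argument and the forward invariance of $C(a)$, both of which follow directly from the definitions in Section~\ref{S:basic}.
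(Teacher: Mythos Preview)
Your proof is correct. Note, however, that the paper does not supply its own proof of this proposition: it is stated as a result of Kiselman, with references to \cite{K05} and \cite{LT69}, and is used as a black box later on (in the proof of Theorem~\ref{T:max-prnc}). So there is no ``paper's proof'' to compare against.

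That said, your argument is exactly the standard one and is entirely self-contained within the definitions of Section~\ref{S:basic}. The forward direction is the usual propagation of the maximum through the sets $N_k(a)$ via the equality case of a convex combination, and the inductive bookkeeping you flag as delicate is handled correctly: when you reach a boundary point $x$ in some $N_k(a)$ you have $N_1(x)=\{x\}$, so such points neither obstruct the induction nor generate anything new. The converse via the indicator $u=\mathbf{1}_{C(a)}$ is also clean; the forward invariance $x\in C(a)\Rightarrow N_1(x)\subseteq C(a)$ is immediate from the inductive definition of the $N_k$, and yields harmonicity on $C(a)$ and subharmonicity off it exactly as you write.
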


\begin{nt}
For $k= 1, 2, \cdots$, we take $\lambda^{\diamond k}$ as the $\diamond$ product of $\lambda$ $k$ times, i.e. $\lambda^{\diamond k} = \lambda \diamond \lambda\diamond \lambda\cdots \diamond \lambda$.

Take $\omega_x^0(\zeta)= \delta(x,\zeta)$, $\omega_x^1(\zeta)=\lambda(x,\zeta)$, and $\omega_x^k(\zeta)=\lambda^{\diamond k}(x,\zeta)$.  In a slight abuse of notation for any function $g$ on $X$, we let $\omega_x^k(g)$ denote
\[\omega_x^k(g) := \sum_{\zeta \in X}\lambda^{\diamond k}(x,\zeta)g(\zeta).\]
\end{nt}

We see how this notation is useful in the following technical lemma.
\begin{lma}\label{L:mono}
If $g$ is a subharmonic function on $X$ then $\{\omega_x^k(g)\}_{k=0}^\infty$ is an increasing sequence, i.e. $\omega_x^k(g) \le \omega_x^{k+1}(g)$ for all $x\in X$ and all $k\in \mathbb{N}$.
\end{lma}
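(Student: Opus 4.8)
The plan is to show that applying one more step of the averaging operator $\omega^1 = \lambda$ never decreases the values, and then to iterate. The key observation is that the operator $g \mapsto \omega^1_x(g) = \sum_{\zeta} \lambda(x,\zeta) g(\zeta)$ is \emph{monotone}: if $g \le g'$ pointwise on $X$, then $\omega^1_x(g) \le \omega^1_x(g')$ for every $x$, simply because $\lambda \ge 0$. Combined with the fact that $\sum_\zeta \lambda(x,\zeta) = 1$, this makes $\omega^1$ an averaging operator that preserves subharmonicity.

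First I would record the base case $k=0$. Subharmonicity of $g$ says precisely that $g(x) \le \sum_\zeta \lambda(x,\zeta) g(\zeta)$, i.e. $\omega^0_x(g) = g(x) \le \omega^1_x(g)$ for all $x \in X$. Second, I would check that $\omega^1(g)$ is again subharmonic, so that the argument can be repeated. Writing $G(\zeta) = \omega^1_\zeta(g)$, subharmonicity of $G$ at a point $x$ requires $G(x) \le \sum_\zeta \lambda(x,\zeta) G(\zeta)$; but this is exactly $\omega^1_x(g) \le \omega^2_x(g)$, which follows from the base-case inequality $g \le \omega^1(g)$ together with monotonicity of $\omega^1$. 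More directly: since $g \le \omega^1(g)$ pointwise, applying the monotone operator $\omega^{1}$ to both sides $k$ times gives $\omega^k_x(g) \le \omega^{k+1}_x(g)$, where one uses the semigroup identity $\lambda^{\diamond k} \diamond \lambda = \lambda^{\diamond(k+1)}$ (associativity of $\diamond$) to identify the iterate of $\omega^1$ applied to $g$ with $\omega^k(g)$. The only point needing a word of care is that all these sums are finite: because $\lambda$ is a structure function, $\lambda^{\diamond k}(x,\cdot)$ has finite support for each $x$ and $k$, so every $\omega^k_x(g)$ is a finite sum and all manipulations (reindexing, splitting sums, interchanging order of summation) are legitimate with no convergence issues.

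I do not anticipate a genuine obstacle here; the lemma is essentially the discrete analogue of the statement that the heat semigroup increases subharmonic functions. The one mildly delicate step is making the induction clean — either by proving "$\omega^1$ maps $S(X)$ to $S(X)$" as a standalone fact and then inducting, or by directly invoking monotonicity of the iterated operator on the pointwise inequality $g \le \omega^1(g)$. I would present it the second way, since it avoids re-deriving subharmonicity of the iterates and keeps the proof to a few lines: establish $g \le \omega^1(g)$ from the definition, note $\omega^1$ is order-preserving because $\lambda \ge 0$, apply $\omega^1$ repeatedly, and use associativity of $\diamond$ to rewrite the result as $\omega^k(g) \le \omega^{k+1}(g)$.
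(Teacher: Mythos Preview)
Your proposal is correct and follows essentially the same approach as the paper: both start from the base case $g(x)\le\omega_x^1(g)$ given by subharmonicity, then push this inequality through the averaging operator using nonnegativity of $\lambda$ and finiteness of the sums to interchange summation. The only difference is presentational---you frame the inductive step as ``$\omega^1$ is order-preserving, apply it to $g\le\omega^1(g)$,'' while the paper writes out the same step as substituting $g(\zeta)\le\sum_\eta\lambda(\zeta,\eta)g(\eta)$ inside the outer sum and then swapping summation order.
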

\begin{proof}
First observe that
\[\omega_x^0(g)=g(x)\le \sum_{\zeta\in X} \lambda(x,\zeta) g(\zeta) = \omega_x^1(g),\]
holds for any subharmonic $g$ and any $x\in X$ by the definitions. We may repeat the substitution at $\zeta$ inside the summation as
\[g(x)\le \sum_{\zeta\in X} \lambda(x,\zeta) g(\zeta)\le \sum_{\zeta\in X} \lambda(x,\zeta)\left(\sum_{\eta\in X} \lambda(\zeta,\eta) g(\eta)\right).\]
The summations are all finite by hypothesis (the graph is locally finite) and so we swap the order of summation
\[\omega_x^0(g)\le\omega_x^1(g) \le \sum_{\eta\in X}\left(\sum_{\zeta\in X} \lambda(x,\zeta) \lambda(\zeta,\eta)\right) g(\eta) = \sum_{\eta\in X}(\lambda\diamond \lambda)(x,\eta) g(\eta) = \omega_x^2(g).\]
Clearly this process may be repeated.  Therefore $\{\omega_x^k(g)\}_{k=0}^\infty$ is an increasing sequence.
\end{proof}

\subsection{The ends of a graph}

We define a {\it ray} in  an undirected infinite graph $X$ as a one way infinite cycle-free (undirected) path $\pi = x_0x_1x_2\ldots$ in $X$. For directed graphs, we will require rays to be directed; that is, $\lambda(x_i, x_{i+1}) > 0$ for all $i \geq 0$.

Following \cite{W96}, consider any finite subgraph $A$ of $X$.  Then $X\setminus A$ breaks into finitely many connected components, one of which must contain infinitely many of the points of $\pi$.  In this case we say $\pi$ is said to {\it end} in that component.  Given two rays $\pi$ and $\pi'$ we say that they are {\it equivalent} if $\pi$ and $\pi'$ end in the same component of $X\setminus A$ for every finite $A\subset X$.  An {\it end} of a graph is precisely one such equivalence class of rays.

We let $\partial_{end}X$ denote the collection of ends of $X$ and take $\hat{X}=\partial_{end}X \cup X$.  The boundary of $\hat{X}$ is given by $\partial \hat{X}=\partial_{end}X \cup \partial X$ where
\[\partial X = \{x\in X\colon \lambda(x,x)=1\}.\]  This is the boundary from which we wish to extend harmonic functions.  First we will want to discuss the class of subharmonic functions on $\hat{X}$.
\begin{df}
The set of \emph{subharmonic functions} on $\hat{X}$ is denoted $S(\hat{X})$. A function $f\colon\hat{X}\rightarrow \mathbb{R}$ is said to be in $S(\hat{X})$ if $f|_X\in S(X)$ and
\[ \limsup_{i\rightarrow \infty}f(x_i) \le f(\omega),\]
where $\pi = x_0x_1x_2\ldots $ is in $\omega$.
\end{df}
Of course, each subharmonic function $f$ on $X$ naturally extends to $\hat{X}$ in the following way:
\[f(\omega) = \sup\{ \limsup_{i\rightarrow \infty}f(x_i)\colon \pi = x_0x_1x_2\ldots  \text{ in } \omega \}.\]
Consequently, the class of harmonic functions on $\hat{X}$ is \[H(\hat{X}) = S(\hat{X})\cap (-S(\hat{X})),\]
where $-S(\hat{X})=\{-f\colon f\in S(\hat{X})\}$.

The Dirichlet problem on $\hat{X}$ can now be phrased: for any $f\colon \partial \hat{X} \rightarrow \mathbb{R}$ there is a unique $h\in H(\hat{X})$ such that $h|_{\partial\hat{X}}=f$.

\section{Maximum principles and uniqueness for the Dirichlet problem.}\label{S:max_nn-cnst}

In this section we prove a maximum principle for a class of graphs.  This will be used to supply the ``uniqueness" part of the Dirichlet problem.

\begin{df}
Given a non-constant subharmonic function $f$, we call a finite sequence $\{x_i\}_{i=0}^k \subset X$ such that $f(x_i)< f(x_{i+1})$, $x_{i+1}\in N_1(x_i)$ and $f(x_{i+1}) = \max\{f(y)\colon y\in N_1(x_i)\}$ a \emph{maximally increasing path} for $f$. Note that such a path is cycle-free. Consequently, we define {\em a maximally increasing ray} for $f$ to be an infinite sequence $\{x_i\} \subset X$ such that every finite subsequence is a maximally increasing path for $f$.
\end{df}

\begin{lma}[K{\"o}nig's Infinity Lemma] Let $V_0, V_1, \ldots$ be an infinite sequence of disjoint non-empty finite sets, and let $X$ be an undirected graph on their union. Assume that every vertex $v \in V_n$ has a neighbor $g(v)$ in $V_{n-1}$. Then $X$ contains a ray $v_0v_1\ldots$ with $v_n \in V_n$ for all $n$.
\end{lma}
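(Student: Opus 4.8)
The plan is to use the neighbor map $g$ to impose a rooted-tree structure on $Y := \bigcup_{n \ge 0} V_n$ and then extract the ray by an iterated pigeonhole argument. First I would record the basic consequence of the hypothesis: since $g$ sends each $v \in V_n$ to a neighbor in $V_{n-1}$, the iterates $v, g(v), g^2(v), \ldots$ run down through successive levels and land, after $n$ steps, at a vertex $r(v) := g^{n}(v) \in V_0$, with each $g^{j}(v)$ a neighbor in $X$ of $g^{j-1}(v)$. Calling $v$ a \emph{descendant} of $w$ when $w$ occurs in this descending chain, every vertex of $Y$ is a descendant of some vertex of $V_0$. Because $V_0$ is finite while $Y$ is infinite (the $V_n$ being disjoint and nonempty), some $v_0 \in V_0$ has infinitely many descendants.

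Next I would construct the ray inductively. Suppose $v_0, v_1, \ldots, v_n$ have been chosen with $v_k \in V_k$, with $g(v_k) = v_{k-1}$ for $1 \le k \le n$, and with $v_n$ possessing infinitely many descendants. Any descendant $w \neq v_n$ of $v_n$ is, by the observation above, a descendant of the vertex immediately above $v_n$ on $w$'s chain, which is a vertex of $V_{n+1}$ that $g$ sends to $v_n$ — call such a vertex a \emph{child} of $v_n$. There are only finitely many children of $v_n$, since they lie in the finite set $V_{n+1}$, so some child $v_{n+1} \in V_{n+1}$ inherits infinitely many descendants; this continues the induction. The resulting sequence $v_0 v_1 v_2 \cdots$ is the desired ray: consecutive terms are adjacent in $X$ because $v_n = g(v_{n+1})$ is a neighbor of $v_{n+1}$, it satisfies $v_n \in V_n$ for all $n$, and it is cycle-free since its vertices lie in the pairwise disjoint sets $V_n$ and are therefore distinct.

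I do not expect a genuine obstacle here; this is a textbook compactness/pigeonhole argument. The one point that should not be glossed over is \emph{where} finiteness enters: finiteness of $V_0$ supplies the initial vertex with infinitely many descendants, and finiteness of each $V_{n+1}$ is exactly what allows the ``infinitely many descendants'' property to be passed from $v_n$ to one of its children $v_{n+1}$. Without finiteness of the levels the statement is false, so pinning down these two uses is essentially the whole content of writing the proof carefully.
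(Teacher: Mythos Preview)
Your argument is correct and is the standard proof of K\"onig's Infinity Lemma: use $g$ to trace every vertex back to a root in $V_0$, apply pigeonhole to find a $v_0$ with infinitely many descendants, and then recurse through the finite child sets to push the infinite-descendant property down one level at a time. You have also correctly isolated the two places where finiteness is used.

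There is nothing to compare against here: the paper states K\"onig's Infinity Lemma without proof, citing it as a classical result and simply invoking it in the proof of Theorem~\ref{T:max_incr}. Your write-up would serve perfectly well as the omitted proof.
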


\begin{thm}\label{T:max_incr}
Suppose $(X,\lambda)$ is a reversible infinite connected graph.  Given any non-constant subharmonic function $f$ there exists a maximally increasing ray for $f$.
\end{thm}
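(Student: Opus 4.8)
The plan is to build a maximally increasing ray for $f$ using König's Infinity Lemma, where the role of the level sets $V_n$ is played by the set of all initial segments (maximally increasing paths) of length $n$ starting from a fixed vertex. First I would observe that since $f$ is non-constant and subharmonic, the subharmonicity inequality $f(x)\le \sum_{\zeta} \lambda(x,\zeta) f(\zeta)$ at any interior point $x$ forces $N_1(x)$ to contain a vertex $y$ with $f(y)\ge f(x)$, and in fact, wherever $f(x)<\max\{f(y):y\in N_1(x)\}$, one can extend a maximally increasing path by one step. By reversibility the graph has no Kiselman boundary points (every vertex with a neighbor different from itself is interior, and reversibility rules out isolated self-loops in a connected infinite graph), so every vertex is interior and this extension step is always available as long as $f$ has not reached a local maximum.

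The key point is ruling out that a maximally increasing path terminates, i.e.\ reaches a vertex $x_k$ where $f(x_k)\ge f(y)$ for all $y\in N_1(x_k)$. Suppose it did: then at $x_k$ we would have $f(\zeta)\le f(x_k)$ for every $\zeta\in N_1(x_k)$, so $\sum_{\zeta}\lambda(x_k,\zeta)f(\zeta)\le f(x_k)$, and subharmonicity forces equality, hence $f(\zeta)=f(x_k)$ for every $\zeta\in N_1(x_k)$ with $\lambda(x_k,\zeta)>0$. In particular $f(x_{k-1})=f(x_k)$, contradicting $f(x_{k-1})<f(x_k)$ from the definition of a maximally increasing path. (More carefully, one argues that $f$ would be locally constant and then, by connectedness and Proposition \ref{P:max2} applied suitably — or by propagating the equality along edges using reversibility — that $f$ is globally constant, again a contradiction.) Therefore every maximally increasing path of any finite length can be extended to one of length one greater.

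Now I would set up König's Lemma. Fix $x_0$ with $f(x_0) < \max\{f(y) : y \in N_1(x_0)\}$, which exists since $f$ is non-constant; let $V_n$ be the set of maximally increasing paths $x_0 x_1 \cdots x_n$ of length $n$ starting at $x_0$. Each $V_n$ is finite because the graph is locally finite (each vertex has finitely many neighbors, so there are finitely many paths of a given length from $x_0$), each $V_n$ is non-empty by the extension argument of the previous paragraph, and there is a natural map from $V_n$ to $V_{n-1}$ sending a path to its initial segment, which realizes the "neighbor in $V_{n-1}$" hypothesis of König's Lemma on the graph whose vertices are $\bigcup_n V_n$ and whose edges join a length-$n$ path to its length-$(n-1)$ truncation. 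König's Lemma then yields a ray in this auxiliary graph, i.e.\ a nested sequence of maximally increasing paths whose union is an infinite sequence $\{x_i\}$ every finite subsequence of which is a maximally increasing path — precisely a maximally increasing ray for $f$.

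The main obstacle I expect is the termination argument: one must be careful that "maximally increasing" requires strict increase $f(x_i) < f(x_{i+1})$, so a path could in principle get stuck at a vertex that is not a local maximum but from which no strictly larger neighbor exists only because the maximum over $N_1(x_k)$ equals $f(x_k)$; the subharmonicity-plus-reversibility-plus-connectedness argument sketched above (propagating equality $f\equiv f(x_k)$ across edges, using $\lambda(x,y)>0\iff\lambda(y,x)>0$ to move backwards as well as forwards, and invoking connectedness to reach all of $X$) is what closes this gap and contradicts non-constancy. A secondary, more routine point is verifying finiteness of $V_n$, which is immediate from local finiteness of the structure function.
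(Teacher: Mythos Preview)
Your proof is correct and takes essentially the same approach as the paper: show that every maximally increasing path extends by one step (using that reversibility puts $x_{k-1}\in N_1(x_k)$, so subharmonicity at $x_k$ together with $f(x_{k-1})<f(x_k)$ forces a strictly larger neighbor), then invoke K\"onig's Lemma on the tree of such paths. The paper argues the extension step directly rather than by contradiction, and you should make the use of reversibility explicit at the moment you conclude ``in particular $f(x_{k-1})=f(x_k)$''; your parenthetical alternative of propagating constancy to all of $X$ is unnecessary (and would not work as stated, since a local maximum of a subharmonic function need not be global), but your direct contradiction already suffices.
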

\begin{proof}
Suppose $f$ is a non-constant subharmonic function.  Then as $X$ is connected there exists two points $a,b\in X$ such $a\neq b$, $\lambda(a,b)>0$, consequently $\lambda(b,a)>0$, and $f(a)< f(b)$.  Take $x_0=a$ and $x_1\in X$ such that $f(x_1) = \max\{f(y)\colon y\in N_1(x_0)\}$.  As $b\in N_1(x_0)$ we have that $f(x_0)<f(b)\le f(x_1)$.

We proceed to built the sequence inductively.  Suppose we have a maximally increasing path $\{x_i\}_{i=0}^k$ with $f(x_i)< f(x_{i+1})$, $x_{i+1}\in N_1(x_i)$ and $f(x_{i+1}) = \max\{f(y)\colon y\in N_1(x_i)\}$.  We will now find the next term.  Since $\lambda\ge 0$ and $\sum_{y\in X}\lambda(x_k,y)=1$, we think of subharmonicity as averaging the values of $f$ over the nearest neighbors with weights $\lambda$.  By construction we have that $\lambda(x_{k-1}, x_k)>0$.  Hence the reversibly condition implies that the average $\sum_{y\in X}\lambda(x_k,y)f(y)$ includes the $x_{k-1}$ term as $\lambda(x_k,x_{k-1})>0$.  Since $f(x_{k-1})<f(x_k)$ and $f$ is subharmonic, there must be a point $c\in N_1(x_k)$ such that $f(x_k)<f(c)$.  Take $x_{k+1}$ to be a point in $N_1(x_k)$ with $f(x_{k+1})=\max\{f(y)\colon y\in N_1(x_k)\}$ and we have that $f(x_k)<f(c)\le f(x_{k+1})$. This guarantees that $x_{i+1}$ is distinct from $\{x_i\}_{i=0}^k$, so $\{x_i\}_{i=0}^{k+1}$ is a maximally increasing (cycle-free) path. Whence, given any maximally increasing path of length $k$, we can always extend it to a maximally increasing path of length $k+1$. By induction and K{\"o}nig's Infinity Lemma, $X$ contains a maximally increasing ray for $f$.
\end{proof}

Recall that a graph is reversible whenever $\lambda(x,y)>0 \iff \lambda(y,x)>0$. With the next definition we take a weaker form of reversible by allowing some Kiselman type boundary points.
\begin{df}
We say that a graph is {\it quasi-reversible} if $\lambda(x,y)>0$ implies that either $\lambda(y,x)>0$ or $\lambda(y,y)=1$, that is, the graph is reversible everywhere except the boundary points.
\end{df}
Observe that Example 1 is of this type.

\begin{thm}[Maximum Principle]\label{T:max-prnc}
Let $X$ be a connected quasi-regular graph.  Then  for all $f\in S(\hat{X})$ we have
\[\sup\{f(x)\colon x\in \hat{X}\} = \sup\{f(y)\colon y\in \partial \hat{X}\}.\]
\end{thm}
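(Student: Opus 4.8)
The inequality $\sup_{\partial\hat{X}}f\le\sup_{\hat{X}}f$ is automatic since $\partial\hat{X}\subseteq\hat{X}$, so the task is to bound $\sup_{\hat{X}}f$ by $M:=\sup\{f(y):y\in\partial\hat{X}\}$. Every end lies in $\partial\hat{X}$, and for $x\in\partial X$ we have $f(x)\le M$ by definition; hence, writing $s:=\sup_X f$, it suffices to prove $s\le M$, after which $\sup_{\hat{X}}f=\max\{s,\ \sup_{\partial_{end}X}f\}\le M$. (I assume, as seems implicit, that $\partial\hat{X}\neq\emptyset$; for a finite graph with empty boundary the statement is degenerate.) Suppose, for contradiction, that $s>M$.

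First I would dispose of the case that $s$ is attained on $X$. If it is attained at a point of $\partial X$ then $s\le M$ at once; if it is attained at an interior point, Proposition \ref{P:max2} forces $f|_X\equiv s$, and then evaluating at any element of the nonempty set $\partial\hat{X}$ --- a boundary point, or an end through which runs a ray on which $f\equiv s$ --- and invoking the definition of $S(\hat{X})$ again gives $s\le M$. Either way we contradict $s>M$, so $s$ is not attained on $X$; in particular $X$ is infinite and $f|_X$ is nonconstant.

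The core of the argument is to produce a maximally increasing ray, or a maximally increasing path terminating in $\partial X$, that starts from a point of $A:=\{x\in X:f(x)>M\}$; note $A\neq\emptyset$ and $A\subseteq X^\circ$. If $f$ were locally maximal at every point of $A$, subharmonicity would force $f$ to equal $f(x)$ on $N_1(x)$ for each $x\in A$; iterating over $N_k(x)$ and using connectedness ($C(x)=X$) would make $f|_X$ constant, contradicting the previous paragraph. So I would fix $x_0\in A$ that is not locally maximal and $x_1\in N_1(x_0)$ attaining $\max\{f(y):y\in N_1(x_0)\}$, so that $f(x_0)<f(x_1)$; this is a maximally increasing path of length one. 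I would extend it using the mechanism in the proof of Theorem \ref{T:max_incr}, checking that quasi-reversibility is enough: given a maximally increasing path $\{x_i\}_{i=0}^{k}$ with $x_k\in X^\circ$, its predecessor $x_{k-1}$ sends a positive edge to $x_k\neq x_{k-1}$, so $\lambda(x_{k-1},x_{k-1})<1$ and $x_{k-1}\notin\partial X$; quasi-reversibility then gives $\lambda(x_k,x_{k-1})>0$, so the subharmonic average at $x_k$ has a strictly negative $x_{k-1}$-term, forcing some $c\in N_1(x_k)\setminus\{x_{k-1}\}$ with $f(c)>f(x_k)$, and the path extends. Hence it can only terminate by reaching some $x_m\in\partial X$ --- impossible, since then $f(x_m)>f(x_0)>M\ge f(x_m)$ --- or it continues forever as a cycle-free directed maximally increasing ray along which $f$ strictly increases. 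That ray ends in an end $\omega\in\partial_{end}X\subseteq\partial\hat{X}$, and the definition of $S(\hat{X})$ gives $f(\omega)\ge\limsup_i f(x_i)\ge f(x_1)>M$, contradicting $f(\omega)\le M$. So $s\le M$, as required.

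The step I expect to be the main obstacle is the passage from the reversible hypothesis of Theorem \ref{T:max_incr} to quasi-reversibility: one must verify that a maximally increasing path for a subharmonic function can never stall at an interior vertex and can terminate only at a Kiselman boundary point. This is precisely where the hypothesis is used, because the one configuration quasi-reversibility permits but reversibility forbids --- a directed edge with no reverse edge --- always points into a boundary point, and a boundary point cannot occur as a predecessor $x_{k-1}$ in such a path. A smaller but essential point is the reduction that some point of $A$ fails to be locally maximal: without it there need be no maximally increasing path to begin with, and, as simple one-dimensional examples show, $f$ may be locally maximal at individual points of $A$ while still failing to be constant.
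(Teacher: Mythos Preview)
Your proof is correct and follows the same route as the paper's: dispose of the attained-supremum case via Proposition~\ref{P:max2}, then from a point where $f$ exceeds $\sup_{\partial\hat X}f$ build a strictly $f$-increasing directed path that either terminates in $\partial X$ or is a ray lying in some end, in either case yielding the bound; your contradiction framing from a single point of $A$ is a mild streamlining of the paper's version, which runs the same construction from each term of a sequence approaching the supremum. One small slip to clean up: quasi-reversibility reads ``$\lambda(x,y)>0$ implies $\lambda(y,x)>0$ or $y\in\partial X$'', so in your extension step the operative hypothesis is $x_k\in X^\circ$ (which you already assume), not $x_{k-1}\notin\partial X$---your final paragraph repeats this source/target mix-up---but the conclusion $\lambda(x_k,x_{k-1})>0$ is unaffected.
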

\begin{proof}
Let $M=\sup\{f(x)\colon x\in \hat{X}\}$.

We start with the easy case when the supremum occurs at a point. If there is a point $x_0\in \partial \hat{X}$ such that $f(x_0)=M$, we are done.  Suppose otherwise, that there is a point $x_0\in \hat{X}^\circ=X^\circ$, so that $f(x_0)=M$.  Consequently by Proposition \ref{P:max2}, the function $f$ must therefore be constant on $X$ as the graph is connected, which trivially implies the result.

Now we consider what happens when the supremum does not occur at a point.  In this case, there is a sequence $\{x_i\}\subset \hat{X}$ such that $f(x_i)<f(x_{i+1})$ with $\lim_{i\rightarrow \infty}f(x_i)=M$.  If $|\{x_i\}\cap \partial\hat{X}|=\infty$, we are done.  Hence without loss of generality we will assume that $\{x_i\}\subset \hat{X}^\circ$.

Our strategy will be to start a directed path at each $x_i$ (or some other point $y$ where $f(y)=f(x_i)$) on which $f$ is strictly increasing.  This path will either terminate at a boundary point of $X$ or be a ray (an infinite cycle free one way directed path) on which $f$ is strictly increasing.  We will then use this path to conclude the result.

To construct such a path we mimic the previous Theorem \ref{T:max_incr}.   Pick any $x_i$ from $\{x_i\}$.  It may be the case that $f$ is locally constant around $x_i$, that is, $f(x_i)=f(y)$ for all $y$ adjacent to $x_i$.  However since $f$ is non-constant and we are working on a connected quasi-reversible graph, we can find a $y_0$ such that $f(x_i)=f(y_0)$ such that $f$ is not locally constant around $y_0$.  Hence there is a $z$ adjacent to $y_0$ such that $f(z)\neq f(y_0)$.  If $f(z)>f(y_0)$, then take $y_1=z$.  If $f(z)<f(y_0)$, then subharmonicity demands the existence of a $z'$ adjacent to $y_0$ with $f(z')>f(y_0)$; in this case take $y_1=z'$.

If $y_1 \not\in\partial X$, we can find (using subharmonicity and that $f(y_0)<f(y_1)$) a $y_2$ adjacent to $y_1$ such that $f(y_2)>f(y_1)$; otherwise we allow the path to terminate at the point $y_1\in \partial X$.  Using induction we obtain a cycle-free directed path starting at $y_0$ (where $f(x_i)=f(y_0)$) along which $f$ is strictly increasing where the path is either $(1)$ finite with terminal point in $\partial X$ or $(2)$ a ray.

In the first case, we have the path $y_0y_1 \ldots y_m$ where $y_m\in \partial X$ and $f(y_i)<f(y_{i+1})$ and where $\lambda(y_i, y_{i+1})>0$.  Hence
\[f(x_i)=f(y_0)\le f(y_m) \le \sup\{f(y) \colon y\in \partial X\}\le \sup\{f(y) \colon y\in \partial \hat{X}\}.\]

While in the second case, we've constructed a ray $\pi = y_0y_1 \ldots $ with $f(y_i)<f(y_{i+1})$ where $\lambda(y_i, y_{i+1})>0$ and $f(x_i)=f(y_0)$.  Therefore $\pi$ must be in some end $\omega\in\partial_{end}X$, and so
\[f(x_i)=f(y_0)\le \lim_{y_i\in \pi}f(y_i) \le f(\omega) \le \sup\{f(y) \colon y\in \partial \hat{X}\}.\]

Thus in either case we have that $f(x_i)\le \sup\{f(y) \colon y\in \partial \hat{X}\}$. We simply take the supremum over all $x_i \in \{x_i\}$ to see
\[M = \sup\{f(x_i)\colon x_i\in\{x_i\} \} \le \sup\{f(y) \colon y\in \partial \hat{X}\}\]
which is the desired result.
\end{proof}

\begin{thm}[Uniqueness]\label{T:unique}
Let $X$ be connected and quasi-reversible.  If there exists a solution to the Dirichlet problem on $\hat{X}$, then it is unique.
\end{thm}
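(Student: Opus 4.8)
The plan is to reduce uniqueness to the maximum principle of Theorem~\ref{T:max-prnc} in the standard way. Suppose $h_1, h_2 \in H(\hat{X})$ both solve the Dirichlet problem for the same boundary data $f\colon \partial\hat{X}\to\mathbb{R}$, so $h_1|_{\partial\hat{X}} = f = h_2|_{\partial\hat{X}}$. Set $g = h_1 - h_2$. First I would check that $g$ is harmonic on $\hat{X}$: on $X$ this is immediate from linearity of $\Delta$, and at an end $\omega$ one uses that $h_1, h_2 \in H(\hat{X}) = S(\hat{X})\cap(-S(\hat{X}))$ together with the fact that for rays $\pi = x_0x_1x_2\ldots$ in $\omega$ we have both $\limsup_i g(x_i) \le g(\omega)$ and $\limsup_i (-g)(x_i) \le (-g)(\omega)$; this forces $\lim_i g(x_i) = g(\omega)$ along every ray, so $g \in H(\hat{X})$ as well. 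In particular $g \in S(\hat{X})$ and $-g \in S(\hat{X})$, and $g$ vanishes identically on $\partial\hat{X}$.

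Next I would apply Theorem~\ref{T:max-prnc} twice. Applied to $g \in S(\hat{X})$ it gives $\sup_{\hat{X}} g = \sup_{\partial\hat{X}} g = 0$, so $g \le 0$ everywhere on $\hat{X}$. Applied to $-g \in S(\hat{X})$ it gives $\sup_{\hat{X}}(-g) = \sup_{\partial\hat{X}}(-g) = 0$, so $g \ge 0$ everywhere. Hence $g \equiv 0$ on $\hat{X}$, i.e.\ $h_1 = h_2$. One small bookkeeping point: Theorem~\ref{T:max-prnc} as stated is phrased for ``quasi-regular'' graphs and requires connectedness and quasi-reversibility; since $X$ is assumed connected and quasi-reversible here, the hypotheses are exactly met (modulo the evident typo identifying ``quasi-regular'' with ``quasi-reversible'').

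I do not expect a serious obstacle here — the argument is the textbook ``difference of two solutions, apply the maximum principle to $\pm$ it'' scheme — but the one place that genuinely needs care is the claim that $H(\hat X)$ is closed under differences, i.e.\ that $g = h_1 - h_2$ really lies in $H(\hat X)$ and not merely that $g|_X$ is harmonic on $X$. The subtlety is that $S(\hat X)$ is defined by a $\limsup$ inequality at each end, and $\limsup$ is only subadditive, so membership of $g$ in $S(\hat X)$ does not follow formally from $h_1 \in S(\hat X)$ and $-h_2 \in S(\hat X)$ alone. The resolution, as sketched above, is to use that each $h_i$ is simultaneously sub- and super-harmonic at each end, which pins the boundary value down to the actual limit $\lim_i h_i(x_i)$ along any ray in $\omega$; then along a fixed ray $g(\omega) = h_1(\omega) - h_2(\omega) = \lim_i h_1(x_i) - \lim_i h_2(x_i) = \lim_i g(x_i)$, giving both inequalities needed for $g \in H(\hat X)$. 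Once that is in hand, the two applications of Theorem~\ref{T:max-prnc} finish the proof.
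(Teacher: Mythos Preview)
Your proof is correct and follows exactly the same route as the paper: form the difference of two solutions, apply Theorem~\ref{T:max-prnc} to both $g$ and $-g$, and conclude $g\equiv 0$. In fact you are more careful than the paper, which simply asserts that $h-h'$ and $h'-h$ are subharmonic on $\hat X$ without addressing the end condition; your observation that each $h_i\in H(\hat X)$ forces actual limits (not just $\limsup$ bounds) along rays is the right way to close that small gap.
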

\begin{proof}
Let $f\colon \partial\hat{X}\rightarrow \mathbb{R}$.  We seek to show that there is at most one solution to the Dirichlet problem, that is a harmonic $h$ such that $h|_{\partial\hat{X}}=f$.

Suppose $h$ and $h'$ are harmonic and equal to $f$ on $\partial\hat{X}$.  Then $h-h'$ and $h'-h$ are subharmonic and equal to $0$ on $\partial\hat{X}$.  Hence by Theorem \ref{T:max-prnc}
\[\sup\{h(x)-h'(x)\colon x\in \hat{X}\} = \sup\{h'(x)-h(x)\colon x\in \hat{X}\}= \sup\{0\colon y\in \partial\hat{X}\}=0.\]
Therefore $h-h'\le 0$ and $h'-h\le 0$ on $\hat{X}$, and so they are equal.
\end{proof}

\section{Existence for graphs with one end.}
The following trivial observation will be useful.
\begin{lma}\label{L:ext_by_zero}
If $f\colon \partial X \rightarrow \mathbb{R}$ is non-negative, then the extension of $f$ by $0$ to all of $X$ is subharmonic on $X$.
\end{lma}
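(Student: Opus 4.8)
The plan is to verify the subharmonicity inequality directly at every vertex. Write $\tilde f \colon X \to \mathbb{R}$ for the extension of $f$ by zero, so that $\tilde f(x) = f(x)$ for $x \in \partial X$ and $\tilde f(x) = 0$ for $x \in X^\circ$. Since $f \geq 0$ on $\partial X$, we have $\tilde f \geq 0$ everywhere on $X$. Recalling that subharmonicity of $\tilde f$ is equivalent to the mean-value inequality $\tilde f(x) \leq \sum_{\zeta \in X} \lambda(x,\zeta)\, \tilde f(\zeta)$ for every $x \in X$, it suffices to check this inequality in two cases according to whether $x$ lies on the Kiselman boundary or in the interior.

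First, if $x \in \partial X$, then $\lambda(x,x) = 1$ and $\lambda(x,\zeta) = 0$ for all $\zeta \neq x$, so the right-hand side of the inequality collapses to $\tilde f(x)$ and the inequality holds with equality; indeed $\Delta \tilde f(x) = 0$, so every function is harmonic at a boundary point. Second, if $x \in X^\circ$, then $\tilde f(x) = 0$ by definition of the extension, while the right-hand side $\sum_{\zeta \in X} \lambda(x,\zeta)\, \tilde f(\zeta)$ is a finite sum (the graph is locally finite) of products of the non-negative quantities $\lambda(x,\zeta) \geq 0$ and $\tilde f(\zeta) \geq 0$, hence is $\geq 0 = \tilde f(x)$. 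Thus the mean-value inequality holds at every $x \in X$, so $\tilde f \in S(X)$, which is the claim.

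There is no real obstacle in this argument: the only ingredients are $\lambda \geq 0$, the normalization $\lambda(x,x) = 1$ at boundary points, and the hypothesis $f \geq 0$. It is worth emphasizing that the sign hypothesis on $f$ is exactly what makes the interior inequality point in the right direction — without it, the extension by zero would in general fail to be subharmonic at interior vertices adjacent to $\partial X$. The lemma is recorded because this zero-extension provides a convenient explicit subharmonic function on $X$ that will be used as a building block in the existence results of the following sections.
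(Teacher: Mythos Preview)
Your proof is correct and follows essentially the same approach as the paper's own proof: both define the extension $\tilde f$, then verify the mean-value inequality separately at boundary points (where $\lambda(x,\cdot)=\delta(x,\cdot)$ forces harmonicity) and at interior points (where $\tilde f(x)=0$ and non-negativity of the right-hand side gives the inequality). The only difference is the order in which the two cases are treated and the additional explanatory commentary you include at the end.
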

\begin{proof}
Suppose that $f\ge 0$ and define an extension of $f$ by $0$ as
\[\tilde{f}(x) = \begin{cases} f(x) & \colon x\in \partial X\\ 0 &\colon \text{ else.}\end{cases}\]

Now we observe that $\tilde{f}\in S(X)$. Suppose $x_0 \notin \partial X$.  Then $\tilde{f}(x_0)=0$, and as $\tilde{f}\ge 0$ everywhere, we have $\tilde{f}(x_0) \le \sum_{\zeta\in X} \lambda(x_0,\zeta)\tilde{f}(\zeta)$.
So $\tilde{f}$ is subharmonic at $x_0$.  If $x_1 \in \partial X$, by definition $\lambda(x_1,y)=\delta(x_1,y)$, the Kronecker delta.  Hence $\tilde{f}$ is harmonic at $x_1$.  Thus $\tilde{f}$ is subharmonic on $X$.
\end{proof}

\begin{thm}
For every non-negative bounded function $f\colon \partial X\rightarrow \mathbb{R}$ there is a bounded harmonic function $h\colon X\rightarrow \mathbb{R}$ such that $h=f$ on ${\partial X}$.  Furthermore $h$ is given by
\[h(x) = \lim_{k\rightarrow \infty} \omega_x^k(f)\]
for all $x\in X$.
\end{thm}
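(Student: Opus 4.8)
The plan is to construct $h$ directly as the limit of the iterated $\lambda$-averages of the zero-extension of $f$, and then extract harmonicity from the semigroup identity for $\lambda^{\diamond k}$.

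First I would let $\tilde f$ be the extension of $f$ by $0$ to all of $X$; since $f \ge 0$, Lemma~\ref{L:ext_by_zero} gives $\tilde f \in S(X)$. Because $\tilde f$ vanishes off $\partial X$, we have $\omega_x^k(\tilde f) = \sum_{\zeta\in\partial X}\lambda^{\diamond k}(x,\zeta)f(\zeta)$, so this quantity is exactly the ``$\omega_x^k(f)$'' of the statement. A one-line induction shows that each $\lambda^{\diamond k}$ is again a weight function (its rows sum to $1$), so with $M := \sup_{\partial X}f$,
\[0 \le \tilde f(x) = \omega_x^0(\tilde f) \le \omega_x^k(\tilde f) \le M \qquad (x\in X,\ k\in\mathbb N),\]
the middle inequality being Lemma~\ref{L:mono}. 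Hence $\{\omega_x^k(\tilde f)\}_k$ is increasing and bounded above, so $h(x) := \lim_{k\to\infty}\omega_x^k(\tilde f)$ exists and satisfies $0 \le h \le M$; in particular $h$ is bounded.

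Next I would check the two remaining properties. For $x\in\partial X$ one has $\lambda(x,\cdot) = \delta(x,\cdot)$, hence inductively $\lambda^{\diamond k}(x,\cdot) = \delta(x,\cdot)$, so $\omega_x^k(\tilde f) = \tilde f(x) = f(x)$ for every $k$ and therefore $h(x) = f(x)$. For harmonicity, a swap of the order of summation (exactly as in the proof of Lemma~\ref{L:mono}, using $\lambda^{\diamond(k+1)}=\lambda\diamond\lambda^{\diamond k}$) gives the \emph{finite} identity
\[\omega_x^{k+1}(\tilde f) = \sum_{\zeta\in N_1(x)}\lambda(x,\zeta)\,\omega_\zeta^{k}(\tilde f),\]
valid for all $x\in X$. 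Letting $k\to\infty$ and passing the limit through this finite sum yields $h(x) = \sum_{\zeta}\lambda(x,\zeta)h(\zeta)$, i.e.\ $\Delta h(x) = 0$, so $h\in H(X)$.

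I do not expect a genuine obstacle here: everything reduces to Lemmas~\ref{L:ext_by_zero} and~\ref{L:mono} together with the fact that $\lambda^{\diamond k}$ remains stochastic. The only step that even appears to need justification, the interchange of limit and sum in the last display, is immediate because local finiteness makes $N_1(x)$ finite, so there is no convergence issue at all. One should note that the hypothesis $f\ge 0$ is used essentially: without it $\tilde f$ need not be subharmonic, and the monotonicity that forces convergence of $\{\omega_x^k(\tilde f)\}_k$ is lost.
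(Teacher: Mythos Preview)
Your proposal is correct and follows essentially the same route as the paper: extend $f$ by zero via Lemma~\ref{L:ext_by_zero}, use Lemma~\ref{L:mono} together with the stochasticity of $\lambda^{\diamond k}$ to get a bounded increasing sequence, and then pass the limit through the finite neighbor sum to obtain harmonicity. The paper's proof is organized identically, differing only in cosmetic details (it writes out the harmonicity computation as a single chain of equalities rather than isolating the identity $\omega_x^{k+1}(\tilde f)=\sum_{\zeta}\lambda(x,\zeta)\omega_\zeta^{k}(\tilde f)$ first).
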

\begin{proof}
By Lemma \ref{L:ext_by_zero} the extension $\tilde{f}$ of $f$ by $0$ is subharmonic.  Therefore by Lemma \ref{L:mono} $\{\omega_x^k(\tilde{f})\}_{k=0}^\infty$ is an increasing sequence.  Since $f$ is bounded above, we have $f\le M$ for some positive constant $M$.  This shows that the sequence $\{\omega_x^k(\tilde{f})\}_{k=0}^\infty$ is bounded above by
\[\omega_x^k(\tilde{f}) = \sum_{\eta\in X} \lambda^{\diamond k}(x,\eta)\tilde{f}(\zeta)\le \sum_{\eta\in X} \lambda^{\diamond k}(x,\eta)M = M,\]
for all $x\in X$ and every $k$.
Thus this is a bounded increasing sequence and hence converges.  Let $h(x) = \lim_k \omega_x^k(\tilde{f})$.  Now we will check that $h$ is harmonic.

Since $\lambda(x,\zeta)$ is non-zero for only finitely many $\zeta$, we have
\begin{align*}\sum_{\zeta \in X}\lambda(x,\zeta)h(\zeta) & = \sum_{\zeta \in X}\lambda(x,\zeta)\lim_{k\rightarrow \infty} \omega_\zeta^k(\tilde{f}) \\& = \sum_{\zeta \in X}\lambda(x,\zeta)\lim_{k\rightarrow \infty} \left(\sum_{\eta\in X}\lambda^{\diamond k}(\zeta, \eta)\tilde{f}(\eta)\right) \\& = \lim_{k\rightarrow \infty} \sum_{\eta\in X}\left( \sum_{\zeta \in X}\lambda(x,\zeta)\lambda^{\diamond k}(\zeta, \eta)\right)\tilde{f}(\eta) \\& =
\lim_{k\rightarrow \infty} \sum_{\eta\in X}\lambda^{\diamond (k+1)}(x, \eta)\tilde{f}(\eta) \\& = \lim_{k\rightarrow \infty}\omega_x^{k+1}(\tilde{f}) = h(x).
\end{align*}
Hence $h$ is harmonic on $X$.

As $\tilde{f}(x)=f(x)$ for all $x\in \partial X$ and $\tilde{f}$ is harmonic at every point $x\in \partial X$ , we have that $\omega_x^{\diamond k}(\tilde{f})=f(x)$ for all $k$.  Hence $h=f$ on $\partial X$.
\end{proof}

For the remainder of the paper we will only consider graphs with finitely many ends, which we will denote $\omega$.  Modeling our work after classical potential theory, we must require the boundary data $f\colon \partial \hat{X}\rightarrow \mathbb{R}$ to be ``continuous at infinity".   Hence we make the following definitions:
\begin{df}
A sequence $\{x_n\}\subset \hat{X}$ is said to \emph{converge} to $x\in \hat{X}$, if for all $Y\subset X$ finite, there exists an $N>0$ such that for all $n>N$, $x_n$ and $x$ lie in the same connected component of $\hat{X}\setminus Y$.

Observe that the ends of the graph are the only limit points.  Hence by this definition a sequence can only converge if $x\in \partial_{end}X$.  We say that a function $f\colon \partial \hat{X}\rightarrow \mathbb{R}$ is \emph{continuous} at $\omega\in \partial_{end} X$, if $\lim_{x_n\rightarrow \omega}f(x_n)=f(\omega)$ for every $\{x_n\}\subset\partial\hat{X}$ converging to $\omega$.  If such an $f$ is continuous at every end, then it is said to be \emph{continuous at infinity}.
\end{df}

Note that in particular, if $\partial X$ is itself finite, then every function $f$ on $\partial X$ is necessarily continuous at infinity.  Furthermore that any harmonic function on $\hat{X}$ must necessarily be continuous at infinity as both $h$ and $-h$ are in $S(\hat{X})$.

Also note that if $X$ is a graph with one end $\omega$, then a function $f\colon \partial \hat{X}\rightarrow \mathbb{R}$ is {\it continuous at infinity}, whenever the set
\[\{x\in \partial X\colon |f(x)-f(\omega)|>\varepsilon\}\]
is finite for every $\varepsilon>0$.

In \cite{CSW93, W96} the solvability of the Dirichlet problem is characterized by two properties of the Green's function, which we will briefly recall.

The \emph{Green's function} on $X$ is defined by
\[G(x,y) = \sum_{k=0}^\infty \lambda^{\diamond k}(x,y), \]
for $x, y\in X$.  The Green's function is said to be \emph{transient} whenever $G(x,y)<\infty$ for all $x\in X^\circ$ and $y\in X$.  The Green's function is said to be \emph{vanishing at infinity} if
\[\lim_{n\rightarrow \infty}G(x_n,y)=0,\]
for every sequence $\{x_n\}\subset X$ which converges to some end. 

The condition that the Green's function be vanishing at infinity cannot be dropped. In \cite[Thm 7.15]{W94} it is stated that for graphs with at least two ends and a transient Green's function, the Dirichlet problem is solvable if and only if the Green's function vanishes at infinity.

The following lemma is easily seen to be a consequence of a transient Green's function which vanishes at infinity.
\begin{lma}\label{L:trans_G_at_inf}
Let $X$ be a connected quasi-reversible one ended graph admitting a transient Green's function which vanishes at infinity. Then for any point $y_0\in \partial X$ and any $\varepsilon>0$ there exists $N_1$ and $N_2$ such that for all $k>N_1$ and all points $x$ with $\dist(x,y_0)>N_2$, we have $\lambda^{\diamond k}(x,y_0)<\varepsilon$.  Here $N_1$ and $N_2$ are allowed to depend on the $y_0$ and $\varepsilon$.
\end{lma}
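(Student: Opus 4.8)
The plan is to reduce the statement to a single estimate on the Green's function and then feed in the ``vanishing at infinity'' hypothesis. The starting point is the trivial remark that, since every summand of $G(x,y_0)=\sum_{k=0}^{\infty}\lambda^{\diamond k}(x,y_0)$ is non-negative, one has $0\le \lambda^{\diamond k}(x,y_0)\le G(x,y_0)$ for every $k$ and every $x$, and transience guarantees that the right-hand side is a finite real number (so that speaking of it tending to $0$ is meaningful). Consequently it suffices to produce an $N_2=N_2(y_0,\varepsilon)$ such that $G(x,y_0)<\varepsilon$ for every $x$ with $\dist(x,y_0)>N_2$; the constant $N_1$ then plays no role in the argument and may be taken to be $0$ (it is presumably included in the statement only to match the way the lemma is invoked later).

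To produce such an $N_2$ I would show that the super-level set $A_\varepsilon=\{x\in X\colon G(x,y_0)\ge \varepsilon\}$ is finite, since then $A_\varepsilon$ is contained in some ball $\{x\colon \dist(x,y_0)\le N_2\}$ about $y_0$ and we are done. Suppose, for contradiction, that $A_\varepsilon$ were infinite. As $X$ is connected and locally finite, every ball about $y_0$ is finite, so we can choose $x_n\in A_\varepsilon$ with $\dist(x_n,y_0)\to\infty$; in particular $\{x_n\}$ eventually leaves every finite subset of $X$. Because $X$ has exactly one end $\omega$, such a sequence necessarily converges to $\omega$ in the sense of the convergence definition: given a finite $Y\subset X$, enlarge it to a ball $B$ about $y_0$; then $X\setminus B$ has only finitely many components and exactly one of them, $C_\infty$, is infinite (this is where one-endedness enters), so the union of $B$ with the finite components of $X\setminus B$ is again a finite set, and once $\dist(x_n,y_0)$ exceeds its radius we have $x_n\notin Y$ and $x_n\in C_\infty$, which lies in the same component of $\hat{X}\setminus Y$ as $\omega$.

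Applying the hypothesis that $G$ vanishes at infinity to the sequence $\{x_n\}$ yields $G(x_n,y_0)\to 0$, contradicting $G(x_n,y_0)\ge\varepsilon$ for all $n$. Hence $A_\varepsilon$ is finite, which completes the argument. The one step that requires genuine care is the topological identification of ``$\dist(x,y_0)\to\infty$'' with ``convergence to the unique end'': it rests on local finiteness (finiteness of balls, and finiteness of the number of components of the complement of a finite set) together with the standing assumption of a single end, and is essentially the content of K{\"o}nig's Infinity Lemma recorded earlier. Note that quasi-reversibility of $X$ is not actually needed for this lemma; it is carried along only because it is a standing hypothesis of the section.
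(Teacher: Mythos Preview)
Your argument is correct and is exactly the kind of reasoning the paper has in mind: the paper does not actually supply a proof of this lemma, stating only that it ``is easily seen to be a consequence of a transient Green's function which vanishes at infinity.'' Your reduction via the termwise bound $\lambda^{\diamond k}(x,y_0)\le G(x,y_0)$, followed by the observation that the super-level set $A_\varepsilon$ must be finite (else one manufactures a sequence converging to the unique end on which $G(\cdot,y_0)$ fails to vanish), is precisely the intended unpacking of that sentence. Your remark that $N_1$ may be taken to be $0$ is also correct; the parameter is redundant here and appears only to align with how the estimate is consumed in Theorem~\ref{T:exists}.
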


We are now ready to solve the Dirichlet problem for one ended graphs.
\begin{thm}\label{T:exists}
Let $X$ be a connected quasi-reversible one ended graph admitting a transient Green's function which vanishes at infinity.  If $f\colon \partial \hat{X}\rightarrow \mathbb{R}$ is bounded and continuous at infinity, then there exists a unique $h\in H(\hat{X})$ such that $h=f$ on $\partial\hat{X}$.
\end{thm}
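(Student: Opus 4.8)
The plan is to get uniqueness for free and concentrate on existence. Uniqueness is immediate: $X$ is connected and quasi-reversible, so Theorem~\ref{T:unique} applies as soon as we produce a solution. For existence I would first normalize the data. Since $f$ is bounded and every constant lies in $H(\hat X)$, subtracting the constant $f(\omega)$ reduces matters to the case $f(\omega)=0$. Then split $f=f^+-f^-$ on $\partial X$ into positive and negative parts; each $f^\pm$ is non-negative, bounded by whatever bounds $f$, and --- this is where \emph{continuity at infinity} is used --- the set $\{y\in\partial X: f^\pm(y)>\varepsilon\}$ is finite for every $\varepsilon>0$, since for a one-ended graph continuity of $f$ at $\omega$ means exactly that $\{y: |f(y)-f(\omega)|>\varepsilon\}$ is finite.

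Next I would assemble the candidate on $X$. Applying the preceding theorem (existence for non-negative bounded data on $\partial X$) to $f^+$ and to $f^-$ gives bounded harmonic functions $h^\pm$ on $X$ with $h^\pm|_{\partial X}=f^\pm$ and $h^\pm(x)=\lim_{k\to\infty}\omega_x^k(\widetilde{f^\pm})$, where $\widetilde g$ denotes the extension of $g$ by $0$ to all of $X$. Put $h=h^+-h^-$; this is bounded, harmonic on $X$, and agrees with $f$ on $\partial X$. The remaining and principal task is to show $h$ extends to a harmonic function on $\hat X$ with $h(\omega)=0$, equivalently that $h(x)\to 0$ as $x\to\omega$ --- which, for a one-ended graph, means $\dist(x,o)\to\infty$ for a fixed base point $o$.

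To handle the behaviour at the end I would rewrite $h^\pm$ more transparently. Since $\widetilde{f^\pm}$ vanishes off $\partial X$ and every point of $\partial X$ is absorbing, $\lambda^{\diamond k}(x,y)$ is non-decreasing in $k$ for $y\in\partial X$ --- this is Lemma~\ref{L:mono} applied to the subharmonic function $\widetilde{\delta_y}$ (subharmonic by Lemma~\ref{L:ext_by_zero}) --- so monotone convergence yields
\[
h^\pm(x)=\sum_{y\in\partial X}F(x,y)\,f^\pm(y),\qquad F(x,y):=\sup_{k}\lambda^{\diamond k}(x,y),
\]
with $\sum_{y\in\partial X}F(x,y)\le 1$. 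Now fix $\delta>0$, let $M$ bound $f^\pm$, set $\varepsilon=\delta/4$, and let $B=\{y\in\partial X: f^+(y)>\varepsilon\text{ or }f^-(y)>\varepsilon\}$, a finite set of size $n$. For each $y\in B$, Lemma~\ref{L:trans_G_at_inf} with tolerance $\delta/(4Mn)$ yields $N_2(y)$ such that $\dist(x,y)>N_2(y)$ forces $\lambda^{\diamond k}(x,y)<\delta/(4Mn)$ for all large $k$, hence $F(x,y)\le\delta/(4Mn)$ by monotonicity in $k$. As $B$ is finite, the triangle inequality gives an $R$ with $\dist(x,o)>R\Rightarrow\dist(x,y)>N_2(y)$ for every $y\in B$, and then
\[
h^\pm(x)\le M\sum_{y\in B}F(x,y)+\varepsilon\sum_{y\notin B}F(x,y)\le M\cdot n\cdot\frac{\delta}{4Mn}+\varepsilon=\frac{\delta}{2},
\]
so $|h(x)|\le|h^+(x)|+|h^-(x)|\le\delta$ once $\dist(x,o)>R$. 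Hence $h(x)\to 0$ as $x\to\omega$. Finally, every ray $\pi=x_0x_1\ldots$ in $\omega$ converges to $\omega$ (one-endedness: removing a finite set leaves a single infinite component), so $\lim_i h(x_i)=0$; setting $h(\omega):=0$ therefore makes both $h$ and $-h$ satisfy the defining inequality for $S(\hat X)$, i.e.\ $h\in H(\hat X)$, and $h=f$ on all of $\partial\hat X$. Uniqueness then follows from Theorem~\ref{T:unique}.

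I expect the crux to be exactly this passage from \emph{$f$ small at infinity along $\partial X$} to \emph{$h$ small near the end of $X$}: \emph{a priori} the harmonic extension could inherit mass from the finitely many boundary points where $f$ is not small, and it is Lemma~\ref{L:trans_G_at_inf} --- that is, the transience and vanishing-at-infinity hypotheses on the Green's function --- that forces each absorption kernel $F(\cdot,y)$ to decay as one runs out to the end, making that mass negligible. The only other point needing (routine) care is the interchange of limit and possibly infinite sum in the formula for $F$, which is justified by the monotonicity of $\lambda^{\diamond k}(x,y)$ in $k$.
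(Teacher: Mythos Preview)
Your proof is correct and follows essentially the same route as the paper: normalize to $f(\omega)=0$, reduce to non-negative data, build $h$ as $\lim_k\omega_x^k(\tilde f)$ via the preceding theorem, and then use continuity at infinity to isolate a finite ``bad'' set of boundary points whose contribution is killed by Lemma~\ref{L:trans_G_at_inf} while the remaining boundary points contribute at most $\varepsilon$. Your only cosmetic departure is packaging the limit as an absorption kernel $F(x,y)=\sup_k\lambda^{\diamond k}(x,y)$ and arguing via a fixed base point $o$ rather than directly with $\omega_x^k$ and $\dist(x,A)$; the substance is identical.
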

\begin{proof}
Uniqueness follows from Theorem \ref{T:unique}.

Let $\omega$ denote the end of $X$.  By replacing $f$ with $f-f(\omega)$ we may assume without loss of generality that $f(\omega)=0$.

Given a function $f$ on $\partial\hat{X}$.  We consider functions $f^+ = \max\{f,0\}.$ and $f^- = -\min\{f,0\}$ so that $f=f^+-f^-$ and $|f|=f^++f^-$.  If $h^+$ and $h^-$ solve the problem for $f^+$ and $f^-$ respectively, then $h=h^+ - h^-$ solves the problem for $f$.  Furthermore if $h$ solves the problem for $f\ge 0$, then $-h$ solves the problem for $-f\le 0$.  Therefore it suffices to solve the problem with non-negative boundary data.  Hence we will assume for the remainder of the proof that $f$ is non-negative and $f(\omega)=0$.

Pick any $\varepsilon >0$.  By continuity at infinity the set
\[A=\{x\in \partial\hat{X} \colon f(x) \ge \varepsilon/2\}\]
is finite.  We will use the notation $|A|$ to denote the cardinality of $A$.

Let $M=\sup\{f(x)\colon x\in \partial\hat{X}\}$.  By Lemma \ref{L:trans_G_at_inf} we can find $N_1$ and $N_2$ such that for all $k>N_1$ and all $x\in X$ with $\dist(x,A)>N_2$ we have
\[\lambda^{\diamond k}(x,y)< \frac{\varepsilon}{2M|A|},\qquad (y\in A) \] where $\dist(x,A)$ denotes the maximum of $\dist(x,y)$ with $y\in A$.

As usual we denote the extension of $f$ by $0$ by $\tilde{f}$.  Hence for all $x\in X$ with $\dist(x,A)>N_2$ and any $k>N_1$, we have
\begin{align*}
\lambda^{\diamond k}(x,y)\tilde{f}(y) \le  \frac{\varepsilon}{2M|A|}M = \frac{\varepsilon}{2|A|} \qquad &(y\in A)\\
\lambda^{\diamond k}(x,y)\tilde{f}(y) \le  \lambda^{\diamond k}(x,y)\frac{\varepsilon}{2} \qquad &(y\not\in A)
\end{align*}

Thus for $k>N_1$ and all $x$ with $\dist(x,A)>N_2$ and any $k>N_1$, we have
\begin{align*}
\omega_x^k(\tilde{f}) &= \sum_{y\in X}\lambda^{\diamond k}(x,y)\tilde{f}(y) \\
&= \sum_{y\in X\setminus A}\lambda^{\diamond k}(x,y)\tilde{f}(y) +  \sum_{y\in A}\lambda^{\diamond k}(x,y)\tilde{f}(y) \\
&\le \sum_{y\in X\setminus A}\lambda^{\diamond k}(x,y)\frac{\varepsilon}{2} +  \sum_{y\in A}\frac{\varepsilon}{2|A|} \\
&\le \frac{\varepsilon}{2}\sum_{y\in X}\lambda^{\diamond k}(x,y) +  \frac{\varepsilon}{2} = \varepsilon
\end{align*}
Note that in the last step we used that $\sum_{y\in X}\lambda^{\diamond k}(x,y)=1$ for all $k$ and any $x\in X$.
As in the previous theorem, take $h(x)=\lim_{k\rightarrow \infty}\omega_x^k(\tilde{f})$ and observe that $h(x)\le \varepsilon$ whenever $\dist(x,A)>N_2$.

Since the graph is locally finite, the set
\[B=\{x\in X\colon \dist(x, A)\le N_2\}\]
is finite.  Notice that $h$ has the property that $h(x)\le \varepsilon$ for all $x\in X\setminus B$.  Now consider any ray $\pi=x_0x_1\ldots $ in the graph.  There can be at most finitely many vertices of $\pi$ in $X\setminus B$.  Thus
\[\limsup_{i\rightarrow \infty}h(x_i) \le \varepsilon\]
for every ray $\pi$ in the graph.

Since $f$ is assumed to be non-negative, it follows that $h$ too is non-negative.  As $\varepsilon$ was arbitrary we see that $\lim_{i\rightarrow \infty}h(x_i)=0$ along every ray $\pi=x_0x_1$ in the graph.  Hence $h$ extends harmonically to $\omega$ as $0$ which is, of course, the value of $f$ at $\omega$ as well.  In the previous theorem it was noted that $h=f$ on $\partial X$.  Thus $h\in H(\hat{X})$ and $h=f$ on $\partial \hat{X}$.
\end{proof}

We have now resolved the setting of the Dirichlet problem on the upper half plane (Example 1) which we originally used to motivate the need for including ends.  It is interesting to compare this result to that of \cite[Section 5]{K05}, in particular Example 5.2,  which also concerns the Dirichlet problem on the upper half plane with the $x$-axis as the boundary.  

Here one could extend the `boundary data' to the end by continuity, which is precisely what is occurring in Example 5.2 where one always gets a unique solution on the upper half plane.  By taking $f=0$ and $g=0$ in \cite[Example 5.2]{K05}, we are in a similar setting as Example 1 where we demonstrate that there are multiple solutions to the Dirichlet problem with this boundary condition.  The construction of \cite{K05} only produces the trivial solution $u=0$, implying that $g$ takes the value $0$ on the end. However in general there may be ends that are not limit points of $\partial X$.  

Furthermore the technique in Thm 5.1 of \cite{K05} requires a specific decomposition of the domain based on a growth condition.  As this decomposition is built inductively, it is not clear that one could include boundary values on the ends which are not limit points of $Y$ in a way in which they could be incorporated into the construction technique of \cite[Thm 5.1]{K05}.  Therefore the above Theorem \ref{T:exists} handles boundary data on an end which is not a limit point of $\partial X$ where \cite{K05} would not.

\section{Existence for graphs with finitely many ends.}
We will need one last definition before moving on to our final result.

\begin{df} A slice of a graph $X$ is a finite sequence $y_0y_1y_2\ldots y_m$ in $X$ such that consecutive vertices are adjacent, no vertex is repeated, and $y_0, y_m\subset \partial X$.
\end{df}

We can use the solution for the one-ended case together with a variant of the Schwartz Alternating Method \cite[pp 37--39]{GM08} to solve the multi-ended case.
\begin{thm}\label{T:exists2}
Let $X$ be a connected quasi-reversible graph with finitely many ends admitting a transient Green's function which vanishes at infinity.  If $f\colon \partial \hat{X}\rightarrow \mathbb{R}$ is bounded and continuous at infinity, then there exists a unique $h\in H(\hat{X})$ such that $h=f$ on $\partial\hat{X}$.
\end{thm}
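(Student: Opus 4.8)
The plan is to prove existence (uniqueness is immediate from Theorem~\ref{T:unique}) by induction on the number of ends, using the one-ended result Theorem~\ref{T:exists} as the base case and a discrete version of the Schwarz alternating method to pass from $n-1$ ends to $n$ ends. Fix a bounded $f\colon\partial\hat X\to\mathbb R$ continuous at infinity and list the ends as $\omega_1,\dots,\omega_n$ with $n\ge 2$.

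First I would decompose $X$ into two pieces that overlap in a finite region. Since $X$ has finitely many ends there is a finite set $\sigma$ separating $\omega_1$ from the other ends; let $P_1$ be the component of $X\setminus\sigma$ containing $\omega_1$, and let $\tilde\sigma\subset P_1$ be a second separator obtained by pushing $\sigma$ a finite amount toward $\omega_1$, with $\tilde P_1\subsetneq P_1$ its $\omega_1$-component and $P_1\setminus\tilde P_1$ finite. Let $X_A$ be $P_1$ together with the vertices of $\sigma$ adjacent to it, those adjacent vertices declared Kiselman boundary points (``frozen''), and let $X_B$ be $X\setminus\tilde P_1$ together with the vertices of $\tilde\sigma$ adjacent to it, again frozen. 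Then $X_A$ is connected, quasi-reversible, one-ended with end $\omega_1$, and has a transient Green's function that vanishes at infinity; $X_B$ is connected, quasi-reversible, has ends $\omega_2,\dots,\omega_n$, and inherits the same Green's-function properties; moreover $X_A\cap X_B$ is finite, $X_A^\circ\cup X_B^\circ=X^\circ$, and $\partial X\subset\partial X_A\cup\partial X_B$. The inheritance of transience and of vanishing at infinity holds because freezing vertices only kills the walk sooner (so the Green's function decreases) while not changing which vertices it can reach, and because a sequence running out to an end of $X_A$ or $X_B$ also runs out to that end in $X$. Write $\sigma_A=\partial X_A\setminus(\partial X\cup\{\omega_1\})$ for the finite ``artificial'' boundary of $X_A$, which lies in $X_B^\circ$, and similarly $\sigma_B\subset X_A^\circ$.

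Next I would run the alternating iteration. Start from any bounded $h^{(0)}$ on $X$ equal to $f$ on $\partial X$ and, outside a finite set, locally constant equal to $f(\omega_i)$ toward $\omega_i$ (so $h^{(0)}$ extends to $\hat X$ with $h^{(0)}(\omega_i)=f(\omega_i)$). Inductively, let $h^{(2k+1)}$ agree with $h^{(2k)}$ off $X_A$ and, on $X_A$, be the solution supplied by Theorem~\ref{T:exists} of the Dirichlet problem on $\hat X_A$ with data $f$ on $\partial X\cap X_A$, $f(\omega_1)$ at $\omega_1$, and $h^{(2k)}$ on $\sigma_A$; this data is bounded and, since $\sigma_A$ is finite and $f$ is continuous at $\omega_1$, continuous at infinity. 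Symmetrically let $h^{(2k+2)}$ agree with $h^{(2k+1)}$ off $X_B$ and, on $X_B$, be the solution (which exists by the induction hypothesis, as $X_B$ has $n-1$ ends) with data $f$ on $\partial X\cap X_B$, $f(\omega_i)$ at $\omega_i$ for $i\ge 2$, and $h^{(2k+1)}$ on $\sigma_B$. The two prescriptions glue consistently across $\sigma_A$, resp.\ $\sigma_B$, since those are boundary sets where the data is imposed; by Theorem~\ref{T:max-prnc} each $h^{(j)}$ is bounded by $\sup_{\partial\hat X}|f|$, equals $f$ on $\partial X$, and tends to $f(\omega_i)$ along every ray to $\omega_i$.

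The main work, and the step I expect to be the main obstacle, is convergence. For $k$ even put $w=h^{(k+2)}-h^{(k)}$; then $w$ is harmonic on $X_B^\circ$ and vanishes on $\partial X\cap X_B$ and at $\omega_2,\dots,\omega_n$, so Theorem~\ref{T:max-prnc} gives $\sup_{\hat X_B}|w|=\sup_{\sigma_B}|w|$, while $w|_{\sigma_B}$ is the restriction of $\phi:=h^{(k+1)}-h^{(k-1)}$. On $X_A$, $\phi$ solves a Dirichlet problem whose boundary data is supported on $\sigma_A$ and equal there to $(h^{(k)}-h^{(k-2)})|_{\sigma_A}$; comparing with the solution $\psi_A$ for the datum $\mathbf 1_{\sigma_A}$ gives $|\phi(x)|\le\psi_A(x)\,\sup_{\sigma_A}|h^{(k)}-h^{(k-2)}|$, where $\psi_A(x)=P^{X_A}_x[\text{the walk ever hits }\sigma_A]\le 1$. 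The crucial fact is that this hitting probability is bounded away from $1$ on the finite set $\sigma_B$: since $X_A\setminus\sigma_A=P_1$ is connected and contains both $\sigma_B$ and $\omega_1$, the set $\sigma_A$ does not separate $\sigma_B$ from $\omega_1$ in $X_A$, so transience gives each $x\in\sigma_B$ a positive chance of escaping to $\omega_1$ without meeting $\sigma_A$; hence $\psi_A\le\theta_A<1$ on $\sigma_B$. The symmetric estimate on $X_B$ yields $\theta_B<1$, and chaining the two shows $\sup_{\sigma_B}|h^{(k+2)}-h^{(k)}|\le\theta_A\theta_B\,\sup_{\sigma_B}|h^{(k)}-h^{(k-2)}|$. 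Thus $\{h^{(j)}\}$ is uniformly Cauchy on $\hat X$ and converges uniformly to a bounded $h$; $h$ is harmonic on $X^\circ$ because every interior vertex lies in $X_A^\circ$ or $X_B^\circ$ and so satisfies the mean-value identity for cofinally many $h^{(j)}$, an identity preserved under the locally finite limit; $h=f$ on $\partial X$ because on each boundary vertex the iterates are eventually equal to $f$; and $h$ tends to $f(\omega_i)$ along rays to $\omega_i$ by uniform convergence. Theorem~\ref{T:unique} then identifies $h$ as the unique solution. The delicate points of the write-up will be (i) choosing $\sigma$ and $\tilde\sigma$ so that $X_A$ and $X_B$ are genuinely connected in the sense of this paper — freezing vertices can strand boundary points, which must be excluded or discarded — and (ii) the ``no separation'' property underlying $\theta_A,\theta_B<1$, which is exactly the geometric content that makes the alternation contract.
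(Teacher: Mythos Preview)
Your argument is correct and proceeds by a genuinely different route from the paper's, though both are variants of the Schwarz alternating method.

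The paper decomposes $X$ into $n+1$ overlapping pieces in one stroke: one-ended pieces $Y_1,\dots,Y_n$ (one per end, cut out by two nested slices) and a \emph{finite} central piece $Y_{n+1}$ containing no ends. The iteration alternates between solving simultaneously on all the $Y_i$ (via Theorem~\ref{T:exists}) and solving on the finite piece $Y_{n+1}$. Convergence is obtained by \emph{monotonicity}: by initializing the artificial boundary data at $\max_{\partial X}f$, the iterates satisfy $h_1\ge h_1'\ge h_2\ge h_2'\ge\cdots$ and are bounded below, hence converge pointwise. No rate is needed or obtained.

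You instead split $X$ into only two overlapping pieces, $X_A$ with one end and $X_B$ with $n-1$ ends, and induct on the number of ends. Your convergence mechanism is a genuine \emph{contraction}: successive differences shrink by the factor $\theta_A\theta_B<1$ coming from hitting probabilities of the artificial boundaries. This costs you the verification that $X_B$ inherits all hypotheses of the theorem (which you address), but it buys uniform convergence on $\hat X$, so the behaviour of $h$ at each end follows immediately. By contrast, the paper's monotone limit is only pointwise, and the paper does not really spell out why the limit matches $f$ at the ends.

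One imprecision worth tightening: where you say ``transience gives each $x\in\sigma_B$ a positive chance of escaping to $\omega_1$ without meeting $\sigma_A$'', transience alone does not rule out $\psi_A\equiv 1$ on $X_A^\circ$. The clean justification uses the Green's-function-vanishing hypothesis: $\psi_A(x)\le\sum_{y\in\sigma_A}G_{X_A}(x,y)/G_{X_A}(y,y)\to 0$ as $x\to\omega_1$, so $\psi_A$ is not identically $1$, and then Proposition~\ref{P:max2} on the connected interior $X_A^\circ$ forces $\psi_A<1$ throughout, in particular on the finite set $\sigma_B$. The needed hypothesis is present, so this is only a matter of attribution.
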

\begin{proof}
Suppose $X$ has $n$ ends. We must now take $2n$ disjoint slices. For each end $\omega_i$, we take two disjoint slices $\gamma_{i, inside}$ and $\gamma_{i, outside}$, where each slice partitions $X$ into two components such that one of the components contains only one end $\omega_i$.  As the slices $\gamma_{i, inside}$ and $\gamma_{i, outside}$ are disjoint, we may suppose that $\gamma_{i, outside}$ is entirely contained  in the component of $X\setminus \gamma_{i, inside}$ that includes $\omega_i$.  We do this for each end.  

Let $X_i$ be the vertices of the component of $X\setminus \gamma_{i, inside}$ that includes $\omega_i$.  Let $Y_i = \{y\in X\colon \lambda(x,y) >0 \text{ for some } x\in X_i\}$.  We make $Y_i$ a graph by keeping the induced structure from $X$, except at the points $Y_i\setminus X_i$ which be come Kiselman boundary points, i.e. $\lambda(y,y)=1$ for all $y\in Y_i\setminus X_i$.  Let $X_{n+1}$ be the component of $X\setminus \left(\cup_{i=1}^n \gamma_{i, outside}\right)$ that contains no ends, and define $Y_{n+1} = \{y\in X \colon \lambda(x, y)>0 \text{ for some } x \in X_{n+1}\}$.  Again we give $Y_{n+1}$ the structure of a graph, by taking the induced structure on $X_{n+1}$ and making all $y\in Y_{n+1}\setminus X_{n+1}$ Kiselman boundary points.

To summarize, we've taking $X$ and grouped it into $n+1$ sections: $Y_1, \cdots Y_{n+1}$.  Each section $Y_i$ for $i=1$ to $n$ has exactly one end $\omega_i$, and $Y_{n+1}$ has no ends.  Furthermore the $Y_i$ for $i=1$ to $n$ are disjoint, but each overlaps with $Y_{n+1}$.  The new boundaries are defined so that the graph structure in the interior and on the overlapping sections remains unchanged. 

Consider our boundary data $f:\partial \hat{X}\rightarrow \mathbb{R}$.  On each piece $Y_i$ for $i=1$ to $n$ keep it the same only extend it to the new added boundary as $f(y)= \max\{f(x) \colon x \partial X\}$ for every $y\in Y_i\setminus X_i$, i.e. the new boundary points. Since each of these sections $Y_i$ has only one end we can find a harmonic extension $h_1$ of $f$ to $Y_i$ for for $i=1$ to $n$.  Without loss of generality we use the same labeling $h_1$ for the harmonic solution on each $Y_i$ for $i=1$ to $n$ as the are disjoint.  

Now we solve the Dirichlet problem on $Y_{n+1}$ with boundary data $f(x)$ when $x\in  X_{n+1}\cap \partial X$ or $h_1(y)$ when $y\in \partial Y_{n+1}\cap (X\setminus X_{n+1})$, that is we take the $h_1$ as the boundary values for the added boundary points of $Y_{n+1}$.  We call $h_1'$ the solution to the Dirichlet problem on $Y_{n+1}$ with this boundary data.  

Now we repeat this process, to find $h_2$ on $Y_i$ for $i=1$ to $n$ we use the boundary data $f$ on the original boundary, and $h_1'$ on the $\partial Y_i\setminus \partial X$.  Then we find $h_2'$ on $Y_{n+1}$ using the original boundary data where defined and $h_2$ on $\partial Y_{n+1}\setminus \partial X$.  And we repeat.

This gives us two sequences: $h_n$ and $h_n'$.  I claim they are related in the following way:
\begin{equation}\label{E:sal}
h_1\ge h_1'\ge h_2\ge h_2' \ge \cdots
\end{equation}

It follows from the maximum principle that $h_i\ge h_i'$ and that $h_i'\ge h_{i+1}$ as the boundary points of $Y_{n+1}$ are interior points of $Y_i$ for $i=1$ to $n$ and vice versa.

As $f$ is bounded below, the maximum principle implies that $h_i\ge \min\{f(x)\colon x\in \partial X\}$.  Hence the sequence \ref{E:sal} converges.  Let $h$ be the limit.  Since $h_i=f$ on $\partial X$ and $h_i'=f$ on $\partial X$ for every $i$, this means that the limit $h=f$ on $\partial X$.

It remains to be seen that $h$ is harmonic.  Let $x$ be an interior point of any of the $Y_i$ for $i=1$ to $n$.  Then as the sequence is decreasing 
\[h(x)\le h_i(x) = \sum\lambda(x,y)h_i(x) \]
for all $i$.  Hence 
\[h(x) \le \sum\lambda(x,y)h(x).\]

As $\lambda(x,y)\ge 0$, this implies that 
\[\sum\lambda(x,y)h(x) \le \sum\lambda(x,y)h_i(x)=h_i(x) \]
for all $i$. Hence 
\[\sum\lambda(x,y)h(x) \le h(x), \]
and $h$ is harmonic at $x$.  The same argument with $h_i'$ shows that $h$ is harmonic at the interior points of $Y_{n+1}$.  As the graph structure agrees on the overlap of the $Y_i$'s, this means that $h$ is harmonic everywhere. 
\end{proof}

\section*{Acknowledgments}
The author would like to thank Simon Smith for many helpful discussions.  We would also like to thank the reviewers for their many constructive and helpful suggestions.

\end{document}